\DeclareMathOperator{\Norm}{N}
\DeclareMathOperator{\C}{C}
\DeclareMathOperator{\Max}{Max}
\DeclareMathOperator{\F}{F}
\DeclareMathOperator{\Irr}{Irr}
\newtheorem{theorem}{Theorem}
\newtheorem{lemma}[theorem]{Lemma}
\newtheorem{athm}{Theorem}
\theoremstyle{definition}
\newtheorem{remark}[theorem]{Remark}
\title{Weak second maximal subgroups in solvable groups\thanks{The research of the work was partially supported by the National Natural Science Foundation of China(11771271).}}
\author{Hangyang Meng and Xiuyun Guo\thanks{Corresponding author. E-mail: xyguo@staff.shu.edu.cn}\\
Department of Mathematics, Shanghai University\\
Shanghai 200444, P. R. China}
\begin{document}
\date{}
\maketitle
\begin{abstract}
  In this paper, we investigate the differences between weak second maximal subgroups and second maximal subgroups. A sufficient and necessary condition is also given to describe a class of groups whose weak second maximal subgroups coincide with its second maximal subgroups(called WSM-groups) under the solvable case. As an application, we will prove that every non-vanishing element of a solvable WSM-group lies in its Fitting subgroup.

  \emph{Mathematics Subject Classification (2010):} 20C15, 20D10, 20D30.

  \emph{Keywords: non-vanishing elements, second maximal subgroups, quasi-primitive modules}
\end{abstract}

\section{Introduction}
All groups considered in paper are finite.

Recall that an element $x$ of a group $G$ is said to be the \emph{non-vanishing} element of $G$ if $\chi(x)\neq 0$ for all $\chi \in \Irr(G)$, where $\Irr(G)$ is the set of all irreducible complex character of $G$. It is clear that every central element of a group is non-vanishing. However, as the authors point out in ~\cite{IsaacsNavarroWolf1999}, not only may a non-vanishing element of a group be noncentral, it can even fail to lie in an abelian normal subgroup of the group. For all that, I. Issacs, G. Navarro and T. Wolf prove that every non-vanishing element of odd order in a solvable group must always lie in a nilpotent normal subgroup of the group (see Theorem D in \cite{IsaacsNavarroWolf1999}). They also conjecture that every non-vanishing element of a solvable group $G$ is contained in $\F(G)$, the Fitting subgroup of $G$. In this paper we find a sub-class of solvable groups, which is called solvable \emph{WSM}-groups. We can prove that:

\begin{athm}\label{A}
Let $G$ be a solvable \emph{WSM}-group and $x$ is a non-vanishing element of $G$.  Then $x\in \F(G)$.
\end{athm}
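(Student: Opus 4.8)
The plan is to argue by contradiction through a minimal counterexample, playing the subgroup-theoretic content of the \emph{WSM}-hypothesis against the representation-theoretic machinery for non-vanishing elements from \cite{IsaacsNavarroWolf1999}. So suppose $G$ is a solvable \emph{WSM}-group of least order admitting a non-vanishing element $x\notin\F(G)$. First I would run two standard reductions. Since the inflations of $\Irr(G/N)$ form a subset of $\Irr(G)$, the image $xN$ is non-vanishing in $G/N$ for every normal $N$; taking $N=\Phi(G)$ and using the closure of solvable \emph{WSM}-groups under quotients (from the structural characterisation of such groups established above) together with the identity $\F(G/\Phi(G))=\F(G)/\Phi(G)$, the quotient $G/\Phi(G)$ is again a counterexample unless $\Phi(G)=1$. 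Minimality thus forces $\Phi(G)=1$, whence $F:=\F(G)$ is abelian (a direct product of elementary abelian minimal normal subgroups), complemented, and self-centralising, $\C_G(F)=F$. Writing $H:=G/F$, I reduce to $G=F\rtimes H$ with $H$ acting faithfully on the abelian group $F$, and with $x$ projecting to a nontrivial element $\bar x\in H$, the nontriviality coming from $x\notin F=\C_G(F)$.

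Next I would invoke the \emph{WSM}-characterisation to control the action of $H$ on $F$. The point I expect to extract is that, in this reduced configuration, $F$ must be a \emph{quasi-primitive} $H$-module and that the second-maximal structure forced by the \emph{WSM}-property severely restricts the maximal subgroups of $H$, and hence the geometry of the $H$-orbits on the dual module $\widehat F=\Irr(F)$. Concretely, I would aim to show that the hypothesis rules out the fixed-point/regular-orbit configuration on $\widehat F$ that a noncentral complement element needs in order to remain non-vanishing, so that every $1\ne\bar x\in H$ displaces the relevant $H$-orbits on $\widehat F$ in a manner incompatible with non-vanishing.

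Finally I would run the Clifford-theoretic construction over the abelian normal subgroup $F$. For a linear character $\lambda\in\widehat F$ with inertia group $FH_\lambda$, each $\mu\in\Irr(H_\lambda)$ yields an irreducible character $\chi:=(\tilde\lambda\cdot\mu)^{G}$, where $\tilde\lambda$ is the canonical extension of $\lambda$ to $FH_\lambda$; the induced-character formula then expresses $\chi(x)$ as an orbit sum $\sum_{t}\mu\big(x^{t}\big)$ over those transversal representatives $t$ with $x^{t}\in FH_\lambda$. Using the orbit restriction obtained in the previous step, I would select the point $\lambda\in\widehat F$ and the character $\mu$ so that this sum is empty, or cancels, giving $\chi(x)=0$ and contradicting the non-vanishing of $x$. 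Hence no counterexample exists and $x\in\F(G)$.

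The main obstacle is squarely the middle step: translating the purely lattice-theoretic \emph{WSM}-condition into the representation-theoretic statement that a noncentral $\bar x\in H$ cannot meet the non-vanishing orbit condition on $\widehat F$. This is exactly where quasi-primitivity of the module $F$ and a careful bookkeeping of the $H$-orbits on $\widehat F$ are indispensable; by contrast the reductions of the first paragraph and the induced-character computation of the last are comparatively routine.
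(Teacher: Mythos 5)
Your opening reduction matches the paper's (induction plus passage to $G/\Phi(G)$, so $\Phi(G)=1$ and $\F(G)$ is a product of complemented minimal normal subgroups), and you correctly identify the two representation-theoretic ingredients: non-vanishing forces $x$ to fix a point in every $G$-orbit on the dual module (this is \cite[Lemma 2.3]{IsaacsNavarroWolf1999}), and the \emph{WSM}-condition should yield quasi-primitivity via Theorem~\ref{C}. But your middle step --- showing that a noncentral element \emph{cannot} satisfy the fixed-point condition on $\widehat F$, so that the orbit sum in your induced-character computation is empty or cancels --- is not merely the hard part, it is false as stated. Such configurations genuinely exist: Wolf's classification (\cite[Theorem 2.1]{Wolf2014}, the paper's Lemma~\ref{quasi-primitive}) shows that a faithful quasi-primitive module in which every vector is centralized by a conjugate of a fixed $1\neq x\in\F(G)$ occurs in exactly three exceptional families ($|V|=q^2$ for $q$ a Mersenne prime, $|V|=5^2$, $|V|=3^4$, each with explicitly described $\F(G)$ and $G/\F(G)$). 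The paper does not rule the configuration out; it enters these cases and exhibits in each a $G$-invariant $\theta\in\Irr(\F(G/C_i))$ with $\theta(xC_i)=0$, so that every character of $G/C_i$ lying over $\theta$ vanishes at $xC_i$ by Clifford theory --- a completely different mechanism from your ``empty orbit sum''. Without this case analysis your plan stalls exactly where you flag the obstacle.

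Two further structural points you miss. First, Wolf's theorem requires the distinguished element to lie in the \emph{Fitting subgroup of the acting group}; the paper secures this by a second application of induction: writing $\F(G)=K_1\times\cdots\times K_r$ and $C_i=\C_G(K_i)$ with $\F(G)=\bigcap_i C_i$, it applies the inductive hypothesis to the \emph{WSM}-quotient $G/C_i$ to get $xC_i\in\F(G/C_i)$, and then only needs $x\in C_i$ for each $i$. In your setup $\bar x\in H=G/F$ is merely nontrivial, and nothing places it in $\F(H)$. Second, $F$ itself is in general a \emph{reducible} $H$-module (a direct sum of the $K_i$), so ``$F$ is a quasi-primitive $H$-module'' is not well-posed; the paper works componentwise, making each $K_i$ a faithful strongly irreducible $G/C_i$-module (Theorem~\ref{C} plus Lemma~\ref{strongly irreducible}), transfers strong irreducibility to the dual $\Irr(K_i)$ via Lemma~\ref{the action on characters}$(c)$ --- a transfer you gloss over but which needs proof --- and only then obtains quasi-primitivity from Lemma~\ref{primitive}. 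So while your reductions and your identification of the dual-orbit condition are sound, the proposal is missing the decisive ingredient (Wolf's classification and the invariant vanishing character in the exceptional cases) and the inductive placement of the element in the Fitting subgroup of the quotient, and as written the intended contradiction cannot be reached.
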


Now we introduce \emph{WSM}-groups, which also has its independent meaning since \emph{WSM}-groups can be regard as a generalization of supersolvable groups. Let $H$ be a proper subgroup of a group $G$. We denote by $\Max(G,H)$ the set of all maximal subgroups of $G$ containing $H$. A proper subgroup $H$ of $G$ is called a \emph{second maximal subgroup} of $G$ if $H$ is a maximal subgroup of every member of $\Max(G,H)$, and we say $H$ is a \emph{weak second maximal subgroup} of $G$ if $H$ is a maximal subgroup of some member of $\Max(G,H)$.(See equivalent definitions in \cite{Flavell1995})

It is clear that a second maximal subgroup must be a weak second maximal subgroup. However, the converse is not true in general. For instance, $H=\langle(12)\rangle$ is a weak second maximal subgroup of $G=S_4$ but it is not a second maximal subgroup of $G$. It is natural to ask what will happen for a group $G$ if every weak second maximal subgroup of $G$ is a second maximal subgroup of $G$? For this purpose, we should investigate the differences between weak second maximal subgroups and second maximal subgroups.

\begin{athm}\label{B}
Let $G$ be a solvable group and $H$ be a weak second maximal subgroup of $G$. Then there exists at most one member $X$ of $\Max(G,H)$ such that $H$ is not maximal in $X$.
\end{athm}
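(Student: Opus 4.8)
The plan is to reduce at once to the case where $H$ is core-free and then to analyse $\Max(G,H)$ relative to a single minimal normal subgroup. First I would pass to the quotient $\bar G = G/\Core_G(H)$ and $\bar H = H/\Core_G(H)$: since $\Core_G(H)\le H$, the correspondence theorem identifies the interval $[H,G]$ with $[\bar H,\bar G]$, preserving which members of $\Max(G,H)$ have $H$ as a maximal subgroup and preserving the property of being a weak second maximal subgroup. As $\bar H$ is core-free in $\bar G$, I may assume from the outset that $\Core_G(H)=1$. Now I would fix any minimal normal subgroup $V$ of $G$. Because $G$ is solvable, $V$ is elementary abelian, and because $H$ is core-free we have $V\not\le H$, so that $H<HV$ and the quotient $G/V$ together with the subgroup $HV/V$ become the central objects.

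The key structural observation is a dichotomy: every $M\in\Max(G,H)$ either contains $V$ or complements $V$. Indeed, if $V\not\le M$ then $VM=G$ by maximality of $M$, and since $V$ is abelian the subgroup $M\cap V$ is normal in $VM=G$ and is properly contained in the minimal normal subgroup $V$, forcing $M\cap V=1$ and $G=V\rtimes M$. I would then record the two maximality criteria that govern the two kinds of members. If $M$ complements $V$, the quotient map restricts to an isomorphism $M\cong G/V$ carrying $H$ to $HV/V$; hence $H$ is maximal in $M$ if and only if $HV/V$ is maximal in $G/V$, a condition that does not depend on the choice of complement $M$. If instead $V\le M$, then $HV\le M$, so $H$ can be maximal in $M$ only when $M=HV$.

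The punchline combines these criteria with the hypothesis. Suppose $HV/V$ were not maximal in $G/V$. Then every complement $M$ of $V$ lying over $H$ would fail to have $H$ maximal in it, and every $M\in\Max(G,H)$ with $V\le M$ would satisfy $H<HV<M$ and hence fail as well; thus no member of $\Max(G,H)$ would have $H$ as a maximal subgroup, contradicting the assumption that $H$ is a weak second maximal subgroup. Therefore $HV/V$ is maximal in $G/V$, so $HV\in\Max(G,H)$; the only member containing $V$ is then $HV$ itself, while every complement of $V$ over $H$ does have $H$ maximal in it. Consequently $HV$ is the only possible member of $\Max(G,H)$ in which $H$ can fail to be maximal, which is exactly the assertion. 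The main obstacle is to secure the dichotomy and the complement isomorphism cleanly, and this is precisely where solvability enters, through the elementary abelian nature of $V$ and the resulting triviality of $M\cap V$; once these are in hand, the weak-second-maximality hypothesis does the decisive work of excluding the degenerate case.
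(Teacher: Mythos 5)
Your proof is correct, and it takes a genuinely different route from the paper's. The paper first fixes a good member $M\in\Max(G,H)$ (one in which $H$ is maximal) and two putative bad members $X_1,X_2$, shows $H=M\cap X_i$, and then applies its key lemma (Lemma~\ref{key lemma}: if $H$ is maximal in $M$ but not in $X$, then $H_G=M_G$) to conclude $M_G=1$; this makes $G$ a solvable primitive group, and the Doerk--Hawkes structure theorem supplies a \emph{unique} minimal normal subgroup $S$ with $G=SM$ and $S\cap M=1$, after which each bad $X_i$ must contain $S$ (else the complement isomorphism makes $H$ maximal in $X_i$) and so $X_i=SH$. You bypass both the key lemma and the primitive-group machinery: you fix an \emph{arbitrary} minimal normal subgroup $V$, prove the contain-or-complement dichotomy for members of $\Max(G,H)$ using only that $V$ is abelian, and derive the maximality of $HV/V$ in $G/V$ directly from the weak-second-maximality hypothesis, so that every complement over $H$ is good and the unique possible bad member is pinned down as $HV$. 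Your argument is more elementary and self-contained (no appeal to the uniqueness of the socle of a primitive group, and no need to nominate a good $M$ at the outset), and it yields slightly more information: the exceptional member, if it exists, equals $HV$ for \emph{every} minimal normal subgroup $V$, hence contains $\Soc(G)$. What the paper's organization buys in exchange is reuse: Lemma~\ref{key lemma} is stated for arbitrary groups and is invoked again verbatim in the proof of Theorem~\ref{C}, so the paper amortizes that work across both theorems, whereas your analysis is local to Theorem~\ref{B}.

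One small step you leave implicit: in the punchline, the strict inequality $HV<M$ for a member $M$ containing $V$ needs the remark that $HV=M$ would make $HV/V=M/V$ maximal in $G/V$ by the correspondence theorem, contrary to your supposition; likewise the degenerate possibility $HV=G$ is silently subsumed in the case ``$HV/V$ not maximal in $G/V$'' (no member can then contain $V$, and the complement criterion again shows no member is good). Both points are immediate, but spelling them out would make the proof airtight.
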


Theorem~\ref{B} does not hold for the non-solvable case and the counterexample will be shown in Section $2$. For convenience, we say a group is a \emph{WSM}-group if its every weak second maximal subgroup must be a second maximal subgroup. Our next result will show a equivalent condition for solvable \emph{WSM}-groups so that we may use it to prove Theorem A. Our original motivation is the problem on chief factors of $\emph{WSM}$-groups proposed in \cite[Problem 19.54]{KhukhroMazurov2018}.

In order to state our theorem, we make the following non-standard definition about modules. Let $G$ be a group and $V$ be a $G$-module. We call an irreducible $G$-module $V$ \emph{strongly irreducible} if $G=1$ or $V$ is an irreducible $M$-module for every maximal subgroup $M$ of $G$.
A chief factor $H/K$ of $G$ is called \emph{non-Frattini} if $H/K\nleqslant \Phi(G/K)$.

\begin{athm}\label{C}
Let $G$ be a solvable group. Then the following statements are equivalent:

$(a)$ $G$ is a \emph{WSM}-group;

$(b)$ Every non-Frattini chief factor of $G$, as a $G$-module, is strongly irreducible.
\end{athm}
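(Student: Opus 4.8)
\section*{Proof proposal for Theorem~\ref{C}}

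The plan is to prove both implications by contraposition, resting on two elementary module facts and the observation that both properties are quotient-closed. First I would record quotient-closure. If $K\lhd G$, a weak second maximal subgroup $H/K$ of $G/K$ lifts to a weak second maximal subgroup $H$ of $G$, and $\Max(G/K,H/K)$ corresponds to $\Max(G,H)$, so (a) passes to $G/K$; likewise a chief factor of $G/K$ is a chief factor of $G$ above $K$ with unchanged non-Frattini status, and since a maximal subgroup $M$ with $K\nleqslant M$ satisfies $MK=G$ and therefore acts on the factor through the full image of $G$, strong irreducibility over $G/K$ is exactly the restriction of strong irreducibility over $G$; hence (b) passes to $G/K$ too. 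The two facts are: \emph{(L2)} if $N$ is a minimal normal subgroup and $M$ a maximal subgroup with $N\nleqslant M$, then $N\cap M=1$, $G=NM$, and $N$ is an irreducible $M$-module (a proper nonzero submodule $W$ would give $M<WM<G$); consequently a non-Frattini minimal normal subgroup is strongly irreducible iff it is $M$-irreducible for every maximal $M\geqslant N$. \emph{(L3)} if $M=N\rtimes H$ with $N$ abelian, then $T\mapsto T\cap N$ is an order-isomorphism from the subgroups of $M$ containing $H$ onto the $H$-submodules of $N$ (which, as $N$ centralises itself, are exactly the $M$-submodules of $N$); so $H$ is maximal in $M$ iff $N$ is an irreducible $M$-module.

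For $(a)\Rightarrow(b)$, suppose (b) fails. Factoring out the bottom term of the offending chief factor (legitimate by quotient-closure of (a)), I may assume $G$ has a non-Frattini minimal normal subgroup $N$ that is not strongly irreducible. By (L2) there is a maximal $M$ with $N\leqslant M$ and $N$ reducible over $M$, and since $N$ is non-Frattini it has a maximal complement $M_0$, i.e. $G=N\rtimes M_0$. Set $H:=M_0\cap M$. Dedekind's law gives $M=NH$ with $N\cap H=1$, the projection $G\to M_0$ with kernel $N$ shows $H$ is maximal in $M_0$, and (L3) shows $H$ is not maximal in $M$ because $N$ is $M$-reducible. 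Thus $H$ is a weak second maximal subgroup that is not a second maximal subgroup, so $G$ is not a \emph{WSM}-group.

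For $(b)\Rightarrow(a)$, assume (b) and suppose $H$ is weak second maximal, maximal in $M_a$, but not second maximal, so $H\leqslant M_b$ with $H$ not maximal in $M_b$ for some maximal $M_b\neq M_a$ (Theorem~\ref{B} makes $M_b$ unique, though the argument will not need uniqueness). Factoring out $\Core_G(H)$ (using quotient-closure of (b)) I may assume $\Core_G(H)=1$, so no minimal normal subgroup of $G$ lies in $H$. The decisive point is that whenever a minimal normal subgroup $N$ satisfies $N\nleqslant M_a$, we have $G=N\rtimes M_a$ and the projection $\pi\colon G\to M_a$ with kernel $N$ gives $\pi^{-1}(H)=NH$, so the maximality of $H$ in $M_a$ forces $NH$ to be maximal in $G$. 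I then split on $\Core_G(M_b)$. If some minimal normal $N\leqslant M_b$, then $N\nleqslant M_a$ (else $NH=M_a\leqslant M_b$ would give $M_a=M_b$), so by the above $NH$ is a maximal subgroup contained in $M_b$, whence $M_b=NH=N\rtimes H$ with $N$ non-Frattini; now (L3) converts ``$H$ not maximal in $M_b$'' into ``$N$ is reducible over $M_b$'', contradicting the strong irreducibility of $N$ granted by (b). If instead $\Core_G(M_b)=1$, then $G$ is primitive with a unique minimal normal subgroup $N$ and $G=N\rtimes M_b$; when $N\leqslant M_a$ the projection $G\to M_b$ with kernel $N$ sends $M_a=NH$ onto $H$ and onto a maximal subgroup of $M_b$, while when $N\nleqslant M_a$ the complement isomorphism $M_b\cong G/N\cong M_a$ fixes $H$ pointwise; either way $H$ turns out to be maximal in $M_b$, the desired contradiction.

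The main obstacle I expect is the reverse implication, and within it the primitive case $\Core_G(M_b)=1$: there no normal subgroup of $G$ sits inside $M_b$ to feed into (L3), so the contradiction must instead be extracted from the rigid structure of the two complements $M_a,M_b$ to the single socle $N$ via the projection/complement isomorphisms, and one must verify these carefully in both subcases $N\leqslant M_a$ and $N\nleqslant M_a$. A secondary point requiring care is checking that the two reductions—factoring out the base of a chief factor for (a), and factoring out $\Core_G(H)$ for (b)—genuinely preserve the failure configuration together with property (b); the crucial input there is that complement-type maximal subgroups act irreducibly on a chief factor by (L2), which is exactly what makes strong irreducibility stable under passing to quotients.
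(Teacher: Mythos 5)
Your proposal is correct, and while the forward implication is essentially the paper's argument (the paper takes a complemented chief factor $N/L$ with complement $M/L$, a maximal $H/L\leqslant M/L$ acting reducibly, and exhibits the chain $H<JH<NH$; your $H=M\cap M_0$ with $N\leqslant M$ is the same construction viewed from the other side of the Dedekind correspondence), the reverse implication is organized genuinely differently. The paper first proves a key lemma (its Lemma~1: if $H$ is maximal in $M$ but not in $X$, then $H_G=M_G$), quotients by $U=H_G=M_G$, so that $M_a$ becomes a core-free maximal subgroup of a primitive quotient with unique socle $V/U$, and then splits on whether $V\leqslant X=M_b$: hypothesis $(b)$ is used precisely when $V\leqslant X$, via the same maximality-of-$H/U$-in-$VH/U$ mechanism as your (L3). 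You avoid that lemma entirely, quotient only by $H_G$, and split instead on $\Core_G(M_b)$: your Case 1 (a minimal normal $N\leqslant M_b$, forcing $N\nleqslant M_a$, $NH$ maximal, $M_b=N\rtimes H$, then $(b)$ plus (L3)) plays the role of the paper's $V\leqslant X$ case, while your core-free case splits further into $N\leqslant M_a$ (lattice correspondence under the projection killing $N$) and $N\nleqslant M_a$ (the complement isomorphism $M_a\cong G/N\cong M_b$ fixing $H$ pointwise), both purely structural, paralleling the paper's $V\nleqslant X$ case but with an extra subcase that the paper's organization makes unnecessary (after its key lemma the socle never lies in $M_a$). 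The trade-off: the paper's lemma isolates the combinatorial core once and also powers Theorem~\ref{B}, whereas your version is self-contained at the cost of one more subcase. Two points you should make explicit, though neither is a gap in context: your (L2) claim $N\cap M=1$ and the claim $N\cap M_a=1$ in subcase 2b both require solvability (an abelian minimal normal subgroup gives $N\cap M\lhd NM=G$) and fail for general groups; and the degenerate configurations are excluded for the reasons implicit in your setup --- a chief factor with $N=G$ is automatically strongly irreducible (prime order, trivial action), and a weak second maximal subgroup is never itself maximal in $G$, so $H<M_b$ is proper.
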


\section{The proof of Theorem B}

The following lemma is for the general case.

\begin{lemma}\label{key lemma}
Let $G$ be a group and let $H$ be a subgroup of $G$. If there exist $M, X \in \Max(G,H)$ such that $H$ is maximal in $M$ but not maximal in $X$, then $H_G=M_G$.
\end{lemma}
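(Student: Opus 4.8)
The containment $H \leq M$ immediately gives $H_G \leq M_G$, since any normal subgroup of $G$ contained in $H$ is also contained in $M$. The real content is the reverse inclusion $M_G \leq H$, which (as $M_G$ is already normal in $G$) is equivalent to $M_G \leq H_G$, and hence yields equality. The plan is to argue by contradiction: assume $M_G \not\leq H$ and use the subgroup $X$, on which $H$ fails to be maximal, to force a contradiction.

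Set $N := M_G$, a normal subgroup of $G$ lying inside $M$. First I would examine $HN$. As $N$ is normal and $H, N \leq M$, the product $HN$ is a subgroup with $H \leq HN \leq M$; maximality of $H$ in $M$ forces $HN = H$ or $HN = M$. If $HN = H$ then $N \leq H$ and we are done, so the contradiction hypothesis is precisely that $HN = M$. Note also $X \neq M$, for otherwise $H$ would be maximal in $X = M$.

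Next I would feed $N$ into $X$. The product $XN$ is a subgroup containing the maximal subgroup $X$, so $XN = X$ or $XN = G$. If $XN = X$ then $N \leq X$, whence $M = HN \leq X$ and maximality of $M$ forces $M = X$, contradicting $X \neq M$; therefore $XN = G$. The decisive step is then to pass to the quotient $G/N$. Because $N \leq M$ and $M$ is maximal in $G$, the image $M/N$ is a maximal subgroup of $G/N$. Since $XN = G$, the restriction to $X$ of the canonical projection $G \to G/N$ is surjective, and one checks that the preimage in $X$ of $M/N$ is exactly $X \cap M$ (an element of $X$ lies in $M = HN$ if and only if its image lies in $HN/N = M/N$). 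As maximal subgroups pull back to maximal subgroups under a surjection, $X \cap M$ is maximal in $X$.

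Finally, $H \leq X \cap M \leq M$, and the maximality of $H$ in $M$ leaves only $X \cap M = H$ or $X \cap M = M$; the latter would give $M \leq X$ and again $M = X$, so $X \cap M = H$. Thus $H = X \cap M$ is maximal in $X$, contradicting the hypothesis that $H$ is not maximal in $X$. This rules out $HN = M$, forcing $N = M_G \leq H$ and hence $H_G = M_G$. I expect the only delicate point to be the bookkeeping in the quotient $G/N$, namely correctly identifying $X \cap M$ as the preimage of the maximal subgroup $M/N$ and invoking preservation of maximality under the surjection $X \to G/N$; everything else is a short chain of maximality dichotomies.
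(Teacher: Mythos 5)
Your proof is correct and is essentially the paper's argument: both hinge on taking a normal subgroup $N\leq M_G$ with $N\nleqslant H$, deducing $HN=M$ and $XN=G$, and then transporting the maximality of $M/N=HN/N$ in $G/N$ back into $X$ — your pullback of $M/N$ along the surjection $X\to G/N$, with preimage $X\cap M=H$, is exactly the paper's use of the natural isomorphism $XN/N\cong X/(X\cap N)$ after it shows $H=X\cap M$ and $X\cap N\leq H$. The only cosmetic differences are that the paper first reduces to $H_G=1$ and chooses a minimal normal subgroup $N\leq M_G$, whereas you work directly with $N=M_G$ and no reduction; this is a mild streamlining, not a different route.
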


\begin{proof}
It is clear that we may assume $H_G=1$. If $M_G\neq 1$, then we may take a minimal normal subgroup $N$ of $G$ such that $N\leq M_G$.  Since $H_G=1$, we see $N\nleqslant H$ and therefore $NH=M$ by the maximality of $H$ in $M$. Thus $H\leq H(X\cap N)=X\cap M<M$. The maximality of $H$ in $M$ implies $H=X\cap M$ and so $X\cap N\leq H$. Thus $N\nleqslant X$ and  $G=NX$. Consider the natural group isomorphism $\varphi$ from $NX/N$ to $X/X\cap N$ defined by $\varphi(xN)=x(X\cap N)$ with $x \in X$.
Noticing that $HN/N=M/N$ is maximal in $G/N=XN/N$, we see $H/X\cap N=H(X\cap N)/X\cap N=\varphi(HN/N)$ is maximal in $\varphi(XN/N)=X/X\cap N$.
It follows that $H$ is maximal in $X$, a contradiction. Thus $M_G=1$ and the lemma is true.
\end{proof}

\begin{proof}[{Proof of Theorem~\ref{B}}]
It is clear that we may assume $H_G=1$ and that $H$ is maximal in $M$ for some member $M \in \Max(G,H)$. Assume that there are $X_1, X_2$ in $\Max(G,H)$ such that $H$ is not maximal in $X_i$ for $i=1 ,2$. Our aim is to prove that $X_1=X_2$. Since $H\leqslant M\cap X_i<M$ for $i=1 ,2$, we see $H=M\cap X_i$ by the maximality of $H$ in $M$.

It follows from Lemma~\ref{key lemma} that $M_G=1$ and $G$ is a solvable primitive group. By~\cite[Theorem~ A.15.2(1)]{DoerkHawkes1992}, there exists an unique minimal normal subgroup $S$ of $G$ such that $G=SM$ and $S\cap M=1$. If $S\nleqslant X_1$, then $G=X_1S$ and $X_1\cap S=1$ since $S$ is an abelian minimal normal subgroup of $G$.
Consider natural isomorphisms between $M$ and $MS/S$ and between $X_1S/S$ and $X_1$. Noticing that $HS/S$ is maximal in $MS/S=G/S=X_1S/S$, we see $H$ is maximal in $X_1$, in contradiction to the choice of $X_1$. Thus we may assume that $S\leqslant X_i$ for $i=1 ,2$, and therefore $X_i=X_i \cap SM=S(X_i\cap M)=SH$, which implies that $X_1=SH=X_2$, as desired.
\end{proof}

\begin{remark}
Theorem~\ref{B} does not hold if we remove the solvability of $G$. In fact, let $A=A_p, B=A_{p-1}$, the alternating groups of degree $p$ and $p-1$, where $p$ is a prime greater than $5$. Here $B$ can be viewed as a maximal subgroup of $A$.
Now set $G=A\times A$. Then $X_1=A\times B$ and $X_2=B\times A$ are maximal subgroups of $G$.  Also set the diagonal groups
$$M=\{(x,x)| x \in A\}~and~H=\{(x,x)| x \in B\}.$$
Since $A,B$ are non-abelian simple groups, it follows from Theorem~\cite[Theorem 1.9.14]{Huppert1967} that $M$ is maximal in $G$ and that $H$ is maximal in $B\times B=X_1\cap X_2< X_i$. It is easy to see that $X_i \in \Max(G,H)$ and $H$ is not maximal in $X_i$ for $i=1,2$. Also $H$ is maximal in $M$ since $|M:H|=|A:B|=p$. Thus $H$ is a weak second maximal subgroup of $G$ but not a second maximal subgroup of $G$.
\end{remark}

\section{The proof of Theorem C}

Recall that an irreducible $G$-module $V$ is \emph{strongly irreducible} if $G=1$ or $V$ is also an irreducible $M$-module for every maximal subgroup $M$ of $G$.
It is clear that $V$ is an irreducible (reducible) $G$-module if and only if $V$ is an irreducible (reducible) $G/N$-module whenever $N$ is a normal subgroup of $G$ contained in $\C_G(V)$. Now we begin with the following lemma.

\begin{lemma}\label{strongly irreducible}
Let $G$ be a group and $V$ be a $G$-module. Suppose that $N\lhd G$ and $N\leqslant \C_G(V)$. Then $V$ is a strongly irreducible $G$-module if and only if $V$ is a strongly irreducible $G/N$-module.
\end{lemma}

\begin{proof}
If $V$ is a strongly irreducible $G$-module, then it is clear that $V$ is an irreducible $G/N$-module. Now we may assume that $G/N\neq 1$ and so $G\neq 1$. If $M/N$ is a maximal subgroup of $G/N$, then $M$ is maximal in $G$ and therefore $V$ is an irreducible $M$-module. It follows immediately that $V$, as $M/N$-module, is irreducible. Hence $V$ is a strongly irreducible $G/N$-module.

Conversely, if $V$ is a strongly irreducible $G/N$-module, then it is clear that $V$ is an irreducible $G$-module.
Now we may assume that $G\neq 1$ and $M$ is a maximal subgroup of $G$. Then $MN=M$ or $MN=G$ by the maximality of $M$, which implies that $MN/N=M/N$ is maximal in $G/N$ or $MN/N=G/N$. Thus $V$ is an irreducible $MN/N$-module and immediately $V$ is an irreducible $M$-module. Hence $V$ is a strongly irreducible $G$-module.
\end{proof}

\begin{proof}[{Proof of Theorem~\ref{C}}]

Assume $G$ is a \emph{WSM}-group and that $N/L$ is a non-Frattini chief factor of $G$.
If $N = G$, then $N/L$ has prime order and is an irreducible $U$–module for all $U\leq G$; whence $N/L$ is strongly
irreducible. Now we assume that $N<G$. As $N/L$ is not in $\Phi(G/L)$, there exists a maximal subgroup $M/L$ of $G/L$ such that $N/L$ is not contained in $M/L$. By the maximality of $M$ in $G$ and the solvability of $G$; we have that $N/L$ is abelian, $NM=G$ and $N\cap M = L$.  Note that $M/L\neq 1$ as $N<G$. If there exists a maximal
subgroup $H/L$ of $M/L$ that does not act irreducibly on $N/L$; then there exists an $H$-invariant
subgroup $J/L$ of $N/L$ with $L < J < N$. Then $H < JH < NH<NM=G$ while $H$ is a maximal subgroup of a maximal
subgroup of $G$. This implies that $G$ is not a \emph{WSM}-group, contradicting the hypotheses. Hence Statement $(b)$ is proved.

Conversely, we show that Statement $(b)$ implies Statement $(a)$. Now assume that $H$ is a weak second maximal subgroup but not a second maximal subgroup of $G$.
Then there exist $M, X \in \Max(G,H)$ such that $H$ is maximal in $M$ and not maximal in $X$. Lemma~\ref{key lemma} implies that $H_G=M_G$, written by $U$.
Then $M/U$ is a core-free maximal subgroup of $G/U$. By~\cite[Theorem~A.15.2(1)]{DoerkHawkes1992}, there exists an unique minimal normal subgroup $V/U$ of $G/U$ such that $G=VM$ and $V\cap M=U$. Observe that $X/U$ is also a maximal subgroup of $G/U$. If $V\nleqslant X$, then $G=VX$ and $V\cap X=U$. Since $H/U$ is maximal in $M/U$, we have $HV/V$ is maximal in $MV/V=G/V=XV/V$. It follows that $H$ is maximal in $X$, a contradiction.
Thus we may assume that $V\leqslant X$, and so $X=V(X\cap M)=VH$. Since $V/U$ is a non-Frattini chief factor of $G$, by the hypothesis, $V/U$ is a strongly irreducible $G$-module. It follows from Lemma~\ref{strongly irreducible} that $V/U$ is a strongly irreducible $G/V$-module since $V\leqslant \C_G(V/U)$.

Observe that the action of $G/V$ on $V/U$ and the action of $M/U$ on $V/U$ are equivalent. Thus, by hypothesis, $V/U$ is a strongly irreducible $M/U$-module. The maximality of $H/U$ in $M/U$ implies that $V/U$ is an irreducible $H/U$-module. It follows that $H/U$ is maximal in $HV/U=X/U$ and therefore $H$ is maximal in $X$, in contradiction to the choice of $H$ and $X$.
Thus Statement $(a)$ holds and the theorem is proved.
\end{proof}

\begin{remark}
It is clear that every one-dimensional module must be strongly irreducible. Thus, by Theroem~\ref{C}, every supersolvable group is a \emph{WSM}-group. However, the converse is generally not true. For example, if $V$ is an elementary abelian $3$-group of order $9$ and $\alpha$ is a fixed-point-free automorphism of $V$ of order $8$, then $G=V \langle \alpha \rangle$ is not supersolvable but $G$ is a \emph{WSM}-group. In this viewpoint, solvable \emph{WSM}-groups can be regarded as a generalization of supersolvable groups.
\end{remark}

\section{The proof of Theorem A}

Recall that $V$ is a quasi-primitive $G$-module if $V_N$ is homogeneous for all normal subgroups $N$ of $G$.

\begin{lemma}\label{primitive}
Every strongly irreducible module is quasi-primitive.
\end{lemma}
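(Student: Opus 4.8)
The plan is to argue by contradiction using Clifford theory. Fix a normal subgroup $N$ of $G$; the goal is to show that the restriction $V_N$ is homogeneous. By Clifford's theorem, $V_N$ decomposes as a direct sum $V_N = W_1 \oplus \cdots \oplus W_k$ of its homogeneous components, and $G$ permutes the set $\Omega = \{W_1, \dots, W_k\}$ transitively (each $g \in G$ carries an $N$-submodule to an $N$-submodule, since $N \lhd G$). Letting $T$ denote the stabiliser in $G$ of the component $W_1$, transitivity gives $k = |G:T|$, so that $V_N$ being homogeneous is equivalent to $T = G$.

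Suppose, for contradiction, that $V_N$ is not homogeneous, i.e. $k > 1$; then $T$ is a proper subgroup of $G$, and in particular $G \neq 1$. I would choose a maximal subgroup $M$ of $G$ with $T \leq M < G$, which exists because $G$ is finite. Since $N \lhd G$, the subgroup $M$ normalises $N$ and therefore also permutes the components in $\Omega$. As $T \leq M$, the stabiliser in $M$ of $W_1$ equals $M \cap T = T$, so the $M$-orbit $\mathcal{O}$ of $W_1$ has size $|M:T| < |G:T| = k = |\Omega|$; thus $\mathcal{O}$ is a proper, nonempty subset of $\Omega$.

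Consequently, the subspace $U = \bigoplus_{W \in \mathcal{O}} W$ is a nonzero $M$-submodule of $V$ (it is $M$-invariant because $M$ permutes $\mathcal{O}$) which is proper (since $\mathcal{O} \neq \Omega$). Hence $V_M$ is reducible. But $M$ is a maximal subgroup of $G$ and $V$ is strongly irreducible, so $V_M$ must be irreducible, a contradiction. Therefore $T = G$ and $V_N$ is homogeneous. As $N$ was an arbitrary normal subgroup (the cases $N = 1$ and $G = 1$ being immediate), $V$ is quasi-primitive.

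The argument is essentially forced once Clifford theory is in place, and the one point requiring care is the choice of $M$. The main obstacle is to ensure that sandwiching a maximal subgroup between the inertia group $T$ and $G$ genuinely produces reducibility: this rests on the observation that passing from $G$ to the proper subgroup $M \geq T$ strictly shrinks the orbit of $W_1$, so that the partial sum over that orbit is a proper submodule. Beyond this bookkeeping with the Clifford decomposition, I do not anticipate any difficulty.
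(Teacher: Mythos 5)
Your proof is correct and takes essentially the same route as the paper: both argue by contradiction via Clifford's theorem and derive reducibility of $V$ over a maximal subgroup stabilising part of the decomposition. The only cosmetic difference is that the paper chooses the decomposition with the number $n$ of summands minimal, so that the stabiliser $\Norm_G(V_1)$ is itself maximal and $V_1$ is the proper submodule, whereas you take an arbitrary maximal subgroup $M$ containing the stabiliser $T$ and exhibit the proper $M$-submodule as the sum over the $M$-orbit of $W_1$ --- a variant that avoids invoking the standard fact that a minimal system of imprimitivity has maximal stabiliser.
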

\begin{proof}
Assume that a $G$-module $V$ is strongly irreducible but not quasi-primitive. Clearly $G\neq 1$. By Clifford’s Theorem, there is non-trivial decomposition  of $V$ into a direct sum of subspaces $V=V_1\oplus...\oplus V_n$ ($n>1$) such that $G$ permutes transitively on the set $\{V_1, \ldots V_n\}$.  Do such decomposition and make $n$ as small as possible. In this case, $\Norm_G(V_1)$ is a maximal subgroup of $G$ and $V$ is a reducible $\Norm_G(V_1)$-module. This implies that $V$ is not a strongly irreducible $G$-module, contradicting the hypotheses.
\end{proof}

The following result is the key to the proof of Theorem~\ref{A}.  We refer the
reader to the paper $\cite{Wolf2014}$ for more results about non-vanishing elements.

\begin{lemma}\emph{\cite[Theorem 2.1.]{Wolf2014}}\label{quasi-primitive}
Suppose that $V$ is a faithful quasi-primitive $G$-module and there exists $1\neq x \in \F(G)$
such that each element of $V$ is centralized by a $G$-conjugate of $x$. Then
\begin{itemize}
\item[$(a)$] $|V|=q^2$ for a Mersenne prime $q$ and $G=\F(G)=P\times S\subseteq \Gamma(V)$ where $S$ is cyclic of odd
order and $P$ is the Sylow-2-subgroup of $\Gamma(V)$ that is semi-dihedral of order $4(q + 1)$;
\item[$(b)$] $|V|=5^2$ and $\F(G)=QZ$ for normal subgroups $Q\cong Q_8$ and $Z\cong Z_4$ of $G$ with $Q\cap Z = Z(Q)$
while $G/\F(G)\cong Z_3$ or $S_3$; or
\item[$(c)$] $|V|=3^4$ and $\F(G)$ is isomorphic to a central product $Q_8 Y D_8$ and $G/\F(G)$ is isomorphic to
$Z_5, D_{10}$, the Frobenius group $F_{20}, A_5$, or $S_5$.
\end{itemize}
In all cases, $x$ is an involution and for each non-zero $v \in V$, $\C_{\F(G)}(v)=\langle x^g \rangle$ for some $g \in G$.
\end{lemma}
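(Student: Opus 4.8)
The plan is to treat this as a classification over a finite field: since each of the three conclusions records $|V|$ as a finite cardinality, $V$ is a faithful $G$-module over $\mathbb{F}_r$ for some prime $r$, with $G$ solvable. I would first reduce to the case that $x$ has prime order, which is harmless. Indeed, if $p\mid |x|$ and we set $y=x^{|x|/p}$, then $y$ has order $p$, $1\neq y\in\F(G)$, and $\C_V(x^g)\leqslant\C_V(y^g)$ for every $g\in G$; hence the covering hypothesis $V=\bigcup_{g\in G}\C_V(x^g)$ is inherited by $y$. Thus I may assume $x$ has prime order $p$, and part of the goal becomes showing $p=2$.

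The next step is to exploit the structure of $\F(G)$ forced by quasi-primitivity. Because $V_N$ is homogeneous for every normal $N\lhd G$, each abelian normal subgroup acts homogeneously on $V$; this forces $\Z(\F(G))$ to be cyclic, and the standard structure theory of faithful quasi-primitive modules of solvable groups shows that $\F(G)$ is of central (symplectic) type, namely a central product of $\Z(\F(G))$ with groups of symplectic type, with $V$ realised as the corresponding tensor product. I would record the generic point stabiliser $\C_{\F(G)}(v)$ in these terms, since the final clause asserts it equals $\langle x^g\rangle$; in particular $x$ must lie in, and generate modulo scalars, the part of $\F(G)$ whose fixed spaces can cover $V$.

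The heart of the argument is a covering inequality. Since $x$ acts nontrivially (by faithfulness) and has prime order, $\C_V(x)$ is a proper $\C_G(x)$-invariant subspace, and counting nonzero vectors in $V=\bigcup_{g}\C_V(x^g)$ (the zero vector lies in every $\C_V(x^g)$) yields
\[
|V|-1\;\leqslant\;|G:\C_G(x)|\,\bigl(|\C_V(x)|-1\bigr).
\]
Combining this with the value of $\dim\C_V(x)$ read off from the symplectic-type action, and with the bound on $|G:\C_G(x)|$ coming from the order of the relevant symplectic group, forces $\dim V$ to be very small and, together with the structural constraints, pins $p$ down to $2$, so $x$ is an involution. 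From here I would run the case analysis on the symplectic-type group $E=\F(G)/\Z(\F(G))$ and the action of $G/\F(G)$ on it: the admissible $E$ (those giving $|V|\in\{q^2,5^2,3^4\}$), together with the classification of the irreducible solvable subgroups of the corresponding $\Gamma(V)$, symplectic, or orthogonal groups, should produce exactly the three families (a), (b), (c), and in each case a direct check gives $\C_{\F(G)}(v)=\langle x^g\rangle$.

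The main obstacle is this last case analysis rather than the covering inequality itself: the inequality strongly restricts $\dim V$ but does not by itself separate the three families, so one must classify the possible symplectic-type Fitting subgroups simultaneously with the solvable irreducible overgroups $G/\F(G)$ acting on $V$, and for each candidate verify both the covering hypothesis and that the generic stabiliser is generated by a single involution. Controlling at the same time the arithmetic (the Mersenne condition in (a)) and the group-theoretic constraints (the admissible $G/\F(G)$ in (b) and (c)) is where the real work lies.
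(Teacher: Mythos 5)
You should be aware that the paper contains no proof of this lemma at all: it is quoted verbatim as Theorem~2.1 of Wolf's paper \cite{Wolf2014}, so the only benchmark is Wolf's original argument, and your proposal has to be judged as a free-standing reconstruction. Its frame is directionally reasonable (Manz--Wolf structure theory for faithful quasi-primitive modules of solvable groups, a fixed-point count, a terminal case analysis), but the decisive step is missing, and the one quantitative tool you do supply is overstated. The covering inequality $|V|-1\leqslant |G:\C_G(x)|\,(|\C_V(x)|-1)$ does not ``force $\dim V$ to be very small.'' For an involution $x$ in a symplectic-type Fitting subgroup one typically has $|\C_V(x)|=|V|^{1/2}$, so the inequality merely demands $|x^G|\gtrsim |V|^{1/2}$; measured against the generic bound $|G|\leqslant |V|^{O(1)}$ for solvable quasi-primitive linear groups this is no contradiction in any dimension, and even inserting the sharper bound $|G/\F(G)|\leqslant|\mathrm{Sp}(2n,2)|$ when $|E/\Z(E)|=2^{2n}$ only pushes $n$ into the single digits --- nowhere near the conclusion $|V|\in\{q^2,5^2,3^4\}$. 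The effective device in Wolf's proof is different and you never name it: if $\F(G)$ has a regular orbit on $V$, i.e.\ $\C_{\F(G)}(v)=1$ for some $v$, the covering hypothesis is violated outright, since every nonzero $v$ is fixed by the nontrivial element $x^g\in\F(G)$. The theorem thus reduces to the known classification of quasi-primitive solvable groups whose Fitting subgroup has no regular orbit (including the semilinear branch inside $\Gamma(V)$, where the Mersenne arithmetic of case $(a)$ arises), and that classification is precisely what your plan defers to ``case analysis''; in other words, the placeholder at the end of your proposal is the entire theorem, not a residual verification.

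Two smaller points. Your reduction to prime order proves the structural conclusions for the pair $(G,y)$ with $y=x^{|x|/p}$, but the final clause concerns $x$ itself; you need the return trip, which does work but must be said: the classification gives $|\C_{\F(G)}(v)|=2$ for every nonzero $v$, and the covering hypothesis puts the nontrivial element $x^g$ inside this stabilizer, whence $x^g$ generates it and $x$ is an involution. Second, statements like $\dim \C_V(x)=\dim V/o(x)$ that you implicitly use to evaluate $|\C_V(x)|$ hold on the nose only over a splitting field; over a general $\mathbb{F}_r$ one must pass to eigenvalue multiplicities via Brauer characters or extend scalars, and the count of fixed vectors changes accordingly. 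Neither of these is fatal, but the first must appear explicitly and the second affects the very inequality your plan leans on.
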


Let $V$ be a $G$-module. Recall the action of $G$ on its dual group $\Irr(V)$, the set of all complex characters of $V$. For any $\chi \in \Irr(V)$ and $g \in G$, define $\chi^g$ by
$$\chi^g(a)=\chi(a^{g^{-1}}), a \in V.$$

\begin{lemma}\label{the action on characters}
Let $G$ be a group and $V$ be a $G$-module. Then

$(a)$ $\C_G(V)=\C_G(\Irr(V))$.

$(b)$ If $V$ is an irreducible $G$-module, then $\Irr(V)$ is an irreducible $G$-module.

$(c)$ If $V$ is a faithful strongly irreducible $G$-module, then $\Irr(V)$ is a faithful strongly irreducible $G$-module.
\end{lemma}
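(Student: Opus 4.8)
The plan is to treat $V$ as a finite abelian group written multiplicatively and $\Irr(V)$ as its dual group, and to use the standard annihilator (orthogonality) correspondence for finite abelian groups, checking throughout that this correspondence is compatible with the $G$-action. Parts $(a)$ and $(b)$ carry the content; part $(c)$ then follows formally.

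For part $(a)$ I would argue by a direct computation. If $g\in\C_G(V)$, then $v^{g^{-1}}=v$ for every $v\in V$, so $\chi^g(v)=\chi(v^{g^{-1}})=\chi(v)$ for every $\chi\in\Irr(V)$; hence $\chi^g=\chi$ and $g\in\C_G(\Irr(V))$. For the reverse inclusion, suppose $g\in\C_G(\Irr(V))$. Then $\chi(v^{g^{-1}})=\chi(v)$, that is $\chi(v^{g^{-1}}v^{-1})=1$, for all $v\in V$ and all $\chi\in\Irr(V)$. The key fact here is that the irreducible characters of a finite abelian group separate its elements, so $\chi(w)=1$ for all $\chi$ forces $w=1$; applying this to $w=v^{g^{-1}}v^{-1}$ gives $v^{g^{-1}}=v$ for every $v$, whence $g\in\C_G(V)$.

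For part $(b)$ I would use the annihilator map: for a subgroup $W\leqslant V$ set $W^{\perp}=\{\chi\in\Irr(V):\chi|_W=1\}$. Finite-abelian-group duality gives that $W\mapsto W^{\perp}$ is an inclusion-reversing bijection between the subgroups of $V$ and those of $\Irr(V)$, with $W\neq 1$ iff $W^{\perp}\neq\Irr(V)$ and $W\neq V$ iff $W^{\perp}\neq 1$. The one point needing verification is $G$-equivariance: directly from the definition $\chi^g(v)=\chi(v^{g^{-1}})$ one computes $(W^{\perp})^g=(W^g)^{\perp}$. Consequently $W$ is $G$-invariant precisely when $W^{\perp}$ is, so a proper nonzero $G$-invariant subgroup of $\Irr(V)$ would correspond to one of $V$. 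Since $V$ is irreducible it has no such subgroup, and therefore neither does $\Irr(V)$, which is thus irreducible.

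Finally, part $(c)$ is formal. Faithfulness of $\Irr(V)$ is immediate from $(a)$, since $\C_G(\Irr(V))=\C_G(V)=1$. For strong irreducibility, if $G=1$ there is nothing to prove; otherwise let $M$ be an arbitrary maximal subgroup of $G$. Strong irreducibility of $V$ makes $V$ an irreducible $M$-module, so part $(b)$ applied to $M$ in place of $G$ shows $\Irr(V)$ is an irreducible $M$-module. As $M$ ranges over all maximal subgroups, $\Irr(V)$ is a strongly irreducible $G$-module. The main (and fairly minor) obstacle is keeping the $G$-equivariance $(W^{\perp})^g=(W^g)^{\perp}$ straight in $(b)$; once that is in hand, everything else is bookkeeping.
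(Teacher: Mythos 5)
Your proof is correct and follows essentially the same route as the paper: part $(c)$ is exactly the paper's formal argument, while for $(a)$ and $(b)$ the paper simply cites \cite[Proposition 12.1]{ManzWolf1993}, whose content is precisely the character-separation and annihilator-duality argument you spell out, including the equivariance $(W^{\perp})^g=(W^g)^{\perp}$. The only point worth making explicit is that your duality argument uses finiteness of $V$ and the fact that, for the elementary abelian modules at hand, $G$-invariant subgroups coincide with submodules --- both automatic under the paper's standing conventions.
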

\begin{proof}
$(a),(b)$ easily follow from the definition and~\cite[Proposition 12.1]{ManzWolf1993}. We only prove $(c)$. Suppose that $V$ is a faithful strong irreducible $G$-module and we may assume $G\neq 1$. We see that $(a)$ implies that $\Irr(V)$ is a faithful $G$-module. For every maximal subgroup $M$ of $G$, by hypothesis, $V$ is an irreducible $M$-module. It follows from $(b)$ that $\Irr(V)$ is an irreducible $M$-module. Hence $\Irr(V)$ is a strongly irreducible $G$-module, as desired.
\end{proof}

\begin{proof}[{Proof of Theorem~\ref{A}}]
We work by induction on $|G|$. Apply induction on $G/\Phi(G)$, we may assume $\Phi(G)=1$. Since $G$ is solvable, by~\cite[Theorem~13.8(b)]{DoerkHawkes1992}, we write $\F(G)=K_1\times...\times K_r$, where $K_i$ is a minimal normal subgroup of $G$ and $\F(G)=\cap_{i=1}^r C_i$, where $C_i=\C_G(K_i)$. It suffices to prove that $x \in C_i$ for each $i$.

Since $xC_i$ is a non-vanishing element of the\emph{WSM}-group $G/C_i$, by induction, we have $xC_i \in \F(G/C_i)$. Observe that $K_i$ is a non-Frattini chief factor of $G$.  By Theorem~\ref{C} and Lemma~\ref{strongly irreducible}, $K_i$ is a faithful strongly irreducible $G/C_i$-module.  Thus, by Lemma~\ref{the action on characters}$(c)$, $\Irr(K_i)$ is a faithful strongly irreducible $G/C_i$-module . It follows from Lemma~\ref{primitive} that $G/C_i$ acts faithfully and quasi-primitively on $\Irr(K_i)$. Since $x$ fixes some member of each $G$-orbit of $\Irr(K_i)$ by~\cite[Lemma~2.3]{IsaacsNavarroWolf1999},
we have that $xC_i$ fixes some member of each $G/C_i$-orbit in $\Irr(K_i)$, i.e. every element of $\Irr(K_i)$ is centralized by a $G/C_i$-conjugate of $xC_i$.  If $x \notin C_i$, applying Lemma~\ref{quasi-primitive}, $xC_i$ and $G/C_i$ satisfy the Conclusion $(a),(b)$ or $(c)$ of Lemma~\ref{quasi-primitive} and it is not difficult to see that $\F(G/C_i)$ has an irreducible character $\theta$ such that $\theta(xC_i)=0$ and $\theta$ is $G/C_i$-invariant, which implies that each $\beta \in \Irr(G/C_i|\theta)$ vanishes $xC_i$. Hence $xC_i$ is not a non-vanishing element of $G/C_i$. This is a contradiction and so $x \in C_i$, as desired.
\end{proof}

{\bf Acknowledgements.} The authors would like to thank the referee for pointing out a mistake in an original version and for suggesting us to shorten the proof of Theorem~\ref{A} significantly by using the results in the reference~\cite{Wolf2014}.

\bibliographystyle{plain}
  \bibliography{bibgroup}
\end{document}